\newdimen\plusheight
\def\+{\;\lower\plusheight\hbox{$+$}\;}
\newdimen\minusheight
\def\-{\;\lower\minusheight\hbox{$-$}\;}
\newdimen\cdotsheight
\def\cds{\lower\cdotsheight\hbox{$\cdots$}}
\numberwithin{equation}{section}
 \theoremstyle{plain}
\newtheorem{thm}{Theorem}[section]
\newtheorem{prob}[thm]{Problem}
\begin{document}
\title[Prime characterization from binomial coefficient] {A characterization of a prime $p$ from the binomial coefficient ${n \choose p}$ }
\author{Alexandre Laugier}
\address{Lyc{\'e}e professionnel hotelier La Closerie, 10 rue Pierre Loti - BP 4, 22410 Saint-Quay-Portrieux, France}
\email{laugier.alexandre@orange.fr}
\author{Manjil P.~Saikia}
\thanks{The second author is supported by a DST INSPIRE Scholarship 422/2009 from the Department of Science and Technology, Government of India.}
\address{Department of Mathematical Sciences, Tezpur University, Napaam, Sonitpur, Assam, Pin-784028, India}
\email{manjil@gonitsora.com}

\maketitle


\begin{abstract}
 We complete a proof of a theorem that was inspired by an Indian Olympiad problem, which gives an interesting characterization of a prime number $p$ with respect to the binomial coefficients ${n\choose p}$. We also derive a related result which generalizes the theorem in one direction.
\end{abstract}

\vskip 3mm

\noindent{\footnotesize Key Words: prime moduli, binomial coefficients, floor function.}

\vskip 3mm

\noindent{\footnotesize 2010 Mathematical Reviews Classification
Numbers: 11A07, 11A41, 11A51, 11B50, 11B65, 11B75.}

\section{{Introduction and Motivation}}
\begin{prob}
$7$ divides $\binom{n}{7}-\lfloor\frac{n}{7}\rfloor$, $\forall n \in \mathbb{N}$.
\end{prob}

The above appeared as a problem in the Regional Mathematical Olympiad, India in 2003. Later in 2007, a similar type of problem was set in the undergraduate admission test of Chennai Mathematical Institute, a premier research institute of India where $7$ was replaced by $3$.

This became the basis of the following

\begin{thm}[\cite{mps}, Saikia-Vogrinc]\label{mps-jv}
A natural number $p>1$ is a prime if and only if $\binom{n}{p}-\lfloor\frac{n}{p}\rfloor$ is divisible by $p$ for every non-negative $n$, where $n>p+1$ and the symbols have their usual meanings.
\end{thm}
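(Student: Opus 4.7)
The plan is to prove the two implications separately, using elementary manipulations of binomial coefficients rather than heavier machinery.

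For the forward direction, assume $p$ is prime and write $n=qp+r$ with $0\le r<p$, so that $\lfloor n/p\rfloor=q$ and the task reduces to showing $\binom{qp+r}{p}\equiv q\pmod{p}$. I would expand via Vandermonde's convolution
\[
\binom{qp+r}{p}=\sum_{j=0}^{p}\binom{qp}{j}\binom{r}{p-j},
\]
and argue that only the $j=p$ term survives modulo $p$: the $j=0$ term vanishes because $r<p$, and for each $1\le j\le p-1$ the factor $qp$ in the numerator of $\binom{qp}{j}$ is not cancelled by $j!$ (which is coprime to $p$), forcing $p\mid\binom{qp}{j}$. Hence $\binom{qp+r}{p}\equiv\binom{qp}{p}\pmod p$, and the absorption identity together with the residue computation $(qp-i)\equiv -i\pmod p$ yields $\binom{qp}{p}=q\binom{qp-1}{p-1}\equiv q\pmod p$, closing this direction.

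For the converse, suppose $p>1$ is composite, and let $q$ denote its smallest prime factor. Since $p$ is composite, $p/q$ is an integer with $q\le p/q<p$, so in particular $2\le p/q<p$. I propose to test $n=p+q$, which satisfies $n>p+1$ and $\lfloor n/p\rfloor=1$. The key manipulation is the absorption identity
\[
\binom{p+q}{p}=\binom{p+q}{q}=\Bigl(1+\frac{p}{q}\Bigr)\binom{p+q-1}{q-1}.
\]
The decisive observation, which is precisely where minimality of $q$ enters, is that every prime factor of $(q-1)!$ is strictly less than $q$ and is therefore coprime to $p$; hence $\gcd((q-1)!,p)=1$. Consequently $(q-1)!$ is invertible modulo $p$, and reducing the numerator of $\binom{p+q-1}{q-1}=(p+1)(p+2)\cdots(p+q-1)/(q-1)!$ modulo $p$ gives $\binom{p+q-1}{q-1}\equiv 1\pmod p$. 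Combining,
\[
\binom{p+q}{p}-\Bigl\lfloor\frac{p+q}{p}\Bigr\rfloor\equiv \Bigl(1+\frac{p}{q}\Bigr)-1=\frac{p}{q}\pmod p,
\]
and since $1<p/q<p$ this is nonzero modulo $p$, producing the required counterexample.

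The main obstacle is not computational but conceptual: one must choose the test value of $n$ cleverly. Naive uniform choices like $n=p+2$ work for even composite $p$ but fail for certain odd composite $p$ (for instance $p=25$, where $\binom{27}{25}-1=350$ is divisible by $25$). Taking $n=p+q$ with $q$ the smallest prime factor balances two requirements simultaneously: the short denominator $(q-1)!$ is automatically coprime to $p$ (so invertible modulo $p$), and the shift by $1+p/q$ produces a nonzero residue in $\{2,\ldots,p-1\}$ that cannot accidentally vanish.
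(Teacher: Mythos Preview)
Your proof is correct in both directions, and in each direction you take a route that differs from the paper's.

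For the forward implication, the paper writes $n=ap+b$, cancels the single factor $ap$ from the numerator of $\binom{ap+b}{p}$ against the $p$ in the denominator, and then argues that the remaining $p-1$ factors form a reduced residue system modulo $p$, so their product matches $(p-1)!$. Your approach via Vandermonde's convolution plus the absorption identity $\binom{qp}{p}=q\binom{qp-1}{p-1}$ reaches the same conclusion with a slightly more structural argument; it avoids the case analysis on the two types of factors but trades it for the observation that $p\mid\binom{qp}{j}$ for $1\le j\le p-1$.

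For the converse, the paper lets $q$ be \emph{any} prime factor of $p$, writes $p=q^{x}k$ with $(q,k)=1$, and shows $\binom{p+q}{p}\equiv q^{x-1}k+1\not\equiv 1\pmod{q^{x}}$, deducing the failure modulo $p$ from the failure modulo $q^{x}$. You instead insist that $q$ be the \emph{smallest} prime factor of $p$; this single extra hypothesis buys you $\gcd((q-1)!,p)=1$, so you can invert $(q-1)!$ directly modulo $p$ and conclude $\binom{p+q}{p}\equiv 1+p/q\pmod{p}$ without ever descending to a prime-power modulus. Your version is shorter and stays entirely in arithmetic modulo $p$, at the cost of requiring the specific choice of $q$; the paper's version works for every prime divisor but needs the auxiliary modulus $q^{x}$.
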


\section{{Proof of Theorem \ref{mps-jv}}}

In \cite{mps}, the above theorem is proved. The authors give three different proofs, however the third proof is incomplete. We present below a completed version of that proof.

\begin{proof}
 First we assume that $p$ is prime. Now we consider $n$ as $n=ap+b$ where $a$ is a non-negative integer and $b$ an integer $0\leq b<p$. Obviously,
\begin{equation}
\left\lfloor \frac{n}{p}\right\rfloor=\lfloor \frac{ap+b}{p}\rfloor\equiv a~(mod~p).
\end{equation}
Now let us calculate $\binom{n}{p}~(mod~p)$.
\begin{eqnarray*}
\binom{n}{p}&=&\binom{ap+b}{p}\\
&=&\frac{(ap+b)\cdot(ap+b-1)\cdots(ap+1)\cdot ap\cdot(ap-1)\cdots(ap+b-p+1)}{p\cdot(p-1)\cdots 2\cdot1}\\
&=&\frac{a\cdot(ap+b)\cdot(ap+b-1)\cdots(ap+1)\cdot(ap-1)\cdots(ap+b-p+1)}{(p-1)\cdot(p-2)\cdots 2\cdot 1}\\
&=&\frac{a X}{(p-1)!}
\end{eqnarray*}
where $X=(ap+b)\cdot(ap+b-1)\cdots(ap+1)\cdot(ap-1)\cdots(ap+b-p+1)$.

We observe that there are $(p-1)$ terms in $X$ and each of them has one of the following forms,\\ (a) $ap+r_1$, or\\ (b) $ap-r_2$\\ where $1\leq r_1\leq b$ and $1\leq r_2\leq (p-1-b)$.\\ Thus any two terms from either $(a)$ or $(b)$ differs by a number strictly less than $p$ and hence not congruent modulo $p$. Similarly, if we take two numbers - one from $(a)$ and the other from $(b)$, it is easily seen that the difference between the two would be $r_1+r_2$ which is at most $(p-1)$ (by the bounds for $r_1$ and $r_2$); thus in this case too we find that the two numbers are not congruent modulo $p$. Thus the terms in $X$ forms a reduced residue system modulo $p$ and so, we have,
\begin{eqnarray}
X\equiv(p-1)!\;(mod\;p)
\end{eqnarray}
Thus using $(2.2)$ we obtain,
\begin{eqnarray}
\binom{n}{p}=a\frac{X}{(p-1)!}\equiv a\;(mod\;p)
\end{eqnarray}
So, $(2.1)$ and $(2.3)$ combined gives
\begin{eqnarray}
\left\lfloor \frac{n}{p}\right\rfloor\equiv \binom{n}{p}\;(mod\;p).
\end{eqnarray}
So, forward implication is proved.

To prove the reverse implication, we adopt a contrapositive argument meaning that if $p$ were not prime (that is composite) then we must construct an $n$ such that $(4)$ does not hold. So, let $q$ be a prime factor of $p$. We write $p$ as $p=q^x k$, where $(q,k)=1$. In other words, $x$ is the largest power of $q$ such that $q^x|p$ but $q^{x+1}\!\not|\,p$ (in notation, $q^x || p$). By taking, $n=p+q=q^xk+q$, we have
\begin{eqnarray*}
\binom{p+q}{p}=\binom{p+q}{q}=\frac{(q^xk+q)(q^xk+q-1)\dots (q^xk+1)}{q!}
\end{eqnarray*}
which after simplifying the fraction equals $(q^{x-1}k+1)\frac{(q^xk+q-1)\dots (q^xk+1)}{(q-1)!}$. Clearly, $(q^xk+q-1)\dots (q^xk+1)\equiv (q-1)!\not\equiv 0\;(mod\;q^x)$. Therefore,
\begin{eqnarray*}
\frac{(q^xk+q-1)\dots (q^xk+1)}{(q-1)!}\equiv 1~(mod~q^x)
\end{eqnarray*}
 and
 \begin{eqnarray*}
\binom{p+q}{p}\equiv q^{x-1}k+1~(mod~q^x).
 \end{eqnarray*}
On the other hand obviously,
\begin{eqnarray*}
\left\lfloor\frac{p+q}{p}\right\rfloor=\left\lfloor\frac{q^xk+q}{q^xk}\right\rfloor\equiv 1~(mod~q^x).
\end{eqnarray*}
Now, since $(q,k)=1$, it follows that $q^{x-1}k+1\not\equiv 1\;(mod\;q^x)$. So we conclude,
\begin{eqnarray}
\binom{p+q}{p}\not\equiv \left\lfloor\frac{p+q}{p}\right\rfloor\;(mod\;q^x).
\end{eqnarray}
So, $p\nmid (\binom{p+q}{p}-\lfloor\frac{p+q}{p}\rfloor)$, for if $p|(\binom{p+q}{p}-\lfloor\frac{p+q}{p}\rfloor)$, then since $q^{x}|p$, we would have $q^x|(\binom{p+q}{p}-\lfloor\frac{p+q}{p}\rfloor)$, a contradiction to  $(5)$. Thus, $\binom{p+q}{p}\not\equiv\lfloor\frac{p+q}{p}\rfloor\;(mod\;p).$ Hence we are through with the reverse implication too.

This completes the proof of Theorem \ref{mps-jv}.
\end{proof}

\section{{Another simple result}}

We state and prove the following simple result which generalizes one part of Theorem \ref{mps-jv}

\begin{thm}\label{gen}
For $n=ap+b=a_{(k)}p^k+b_{(k)}$, we have  $$
{a_{(k)}p^k+b_{(k)}\choose p^k}
-\left\lfloor\frac{a_{(k)}p^k+b_{(k)}}{p^k}\right\rfloor\equiv
0~(\textup{\textup{mod}}~p)
$$
with $p$ a prime, $0\leq b_{(k)}\leq p^k-1$ and $k$ a positive integer
such that $1\leq k\leq l$, where $$n=a_0+a_1p+\ldots+a_kp^k+a_{k+1}p^{k+1}+\ldots+a_lp^l$$ and
for $k\geq 1$
$$
a_{(k)}=a_k+a_{k+1}p+\ldots+a_lp^{l-k}
$$
and
$$
b_{(k)}=a_0+a_1p+\ldots+a_{k-1}p^{k-1}.
$$.
\end{thm}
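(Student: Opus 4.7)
The plan is to reduce the stated congruence to a standard computation of $\binom{n}{p^k}\pmod{p}$ and then invoke Lucas' theorem. First, since $0\le b_{(k)}\le p^k-1$, the floor simplifies to
\[
\left\lfloor\frac{a_{(k)}p^k+b_{(k)}}{p^k}\right\rfloor = a_{(k)},
\]
and the defining expression $a_{(k)} = a_k + a_{k+1}p + \cdots + a_l p^{l-k}$ immediately gives $a_{(k)} \equiv a_k \pmod{p}$. Thus the theorem is equivalent to showing $\binom{n}{p^k} \equiv a_k \pmod{p}$, where $a_k$ is the $k$-th base-$p$ digit of $n$.

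The key step is Lucas' theorem. Writing $n = \sum_{i=0}^{l} a_i p^i$ in base $p$, and noting that the base-$p$ expansion of $p^k$ consists of a single $1$ in position $k$ with zeros elsewhere, Lucas' theorem gives
\[
\binom{n}{p^k} \equiv \binom{a_k}{1}\prod_{i \ne k} \binom{a_i}{0} = a_k \pmod{p},
\]
which closes the argument when combined with the reduction above.

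The main obstacle only appears if one insists on a self-contained proof in the spirit of the proof of Theorem \ref{mps-jv}. A natural direct attempt writes $\binom{ap^k+b}{p^k} = aY/(p^k-1)!$, where $Y$ is the product of the $p^k-1$ numerator factors $ap^k+r$ with $r$ running over nonzero integers in $(b-p^k,b]$. These residues form a complete set of nonzero residues modulo $p^k$, so $Y \equiv (p^k-1)!\pmod{p^k}$. For $k=1$ this was enough because $(p-1)!$ is coprime to $p$, but for $k \ge 2$ the factorial $(p^k-1)!$ carries nontrivial $p$-adic valuation, so a congruence only modulo $p^k$ does not suffice to deduce $aY/(p^k-1)! \equiv a \pmod{p}$ after division. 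Extracting the required congruence then demands careful tracking of $p$-adic valuations and essentially reproves Lucas' theorem; this is the bookkeeping that invoking Lucas lets us bypass.
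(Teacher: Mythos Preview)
Your proof is correct, and it takes a genuinely different route from the paper. You reduce the claim to $\binom{n}{p^k}\equiv a_k\pmod p$ and dispatch it with Lucas' theorem (which the paper in fact lists among its references), while the paper proceeds by direct computation: it writes $(p^k-1)!\binom{n}{p^k}=a_{(k)}Y$ with $Y$ the product of the $p^k-1$ shifted residues, shows $Y\equiv (p^k-1)!\pmod{p^k}$, and then---exactly as you anticipated in your third paragraph---must cope with the fact that $(p^k-1)!$ has $p$-adic valuation $1+p+\cdots+p^{k-1}-k$. To cancel this factor legitimately, the paper invokes an exact identity $Y=c_{(k)}p^{k(p^k-1)}+(p^k-1)!$ (justified via its auxiliary Theorem~3.2), divides through by the appropriate power of $p$, and only then reduces modulo $p$. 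Your argument is shorter and cleaner at the cost of importing Lucas; the paper's is self-contained but essentially amounts, as you say, to reproving the special case of Lucas needed here, with nontrivial bookkeeping along the way.
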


The proof of this follows from the reasoning of the proof of Theorem \ref{mps-jv} although there are some subtleties.

In particular, we have
$$
a=a_{(1)}=a_1+a_{2}p+\ldots+a_lp^{l-1}
$$
and
$$
b=b_{(0)}=a_0.
$$
For $k=0$, we set the convention that $a_{(0)}=n=a_0+a_{1}p+\ldots+a_lp^{l}
$ and $b_{(0)}=0$.
Notice that Theorem \ref{gen} is obviously true for $k=0$. But the
case $k=0$ doesn't correspond really to a power of $p$ where $p$ is a
prime.

\begin{proof}
We have
$$\binom{n}{p^k}=\binom{a_{(k)}p^k+b_{(k)}}{p^k}$$
$$=\frac{(a_{(k)}p^k+b_{(k)})\cdot(a_{(k)}p^k+b_{(k)}-1)\cdots(a_{(k)}p^k+1)\cdot
a_{(k)}p^k\cdot(a_{(k)}p^k-1)\cdots(a_{(k)}p^k+b_{(k)}-p^k+1)}
{p^k\cdot(p^k-1)\cdots 2\cdot1}$$
$$=\frac{a_{(k)}\cdot(a_{(k)}p^k+b_{(k)})\cdot(a_{(k)}p^k+b_{(k)}-1)\cdots
(a_{(k)}p^k+1)\cdot(a_{(k)}p^k-1)\cdots(a_{(k)}p^k+b_{(k)}-p^k+1)}
{(p^k-1)\cdot(p^k-2)\cdots 2\cdot 1}.$$ Thus we obtain
$$
(p^k-1)!\binom{n}{p^k}=a_{(k)}\left({\displaystyle\prod^b_{r=1}(a_{(k)}p^k+r)}
\right)\,
\left({\displaystyle\prod^{p^k-1-b}_{r=1}(a_{(k)}p^k-r)}\right).
$$ Or $a_{(k)}p^k+r\equiv r~(\textup{\textup{mod}}~p^k)$ and
$a_{(k)}p^k-r\equiv -r\equiv p^k-r~(\textup{\textup{mod}}~p^k)$ with
$0<r<p^k$. It follows
$$
\left({\displaystyle\prod^b_{r=1}(a_{(k)}p^k+r)}
\right)\,
\left({\displaystyle\prod^{p^k-1-b}_{r=1}(a_{(k)}p^k-r)}\right)
\equiv\left({\displaystyle\prod^b_{r=1}r}\right)\,
\left({\displaystyle\prod^{p^k-1-b}_{r=1}(p^k-r)}
\right)~(\textup{\textup{mod}}~p^k).
$$ Since
$$
\left({\displaystyle\prod^b_{r=1}r}\right)\,
\left({\displaystyle\prod^{p^k-1-b}_{r=1}(p^k-r)}\right)
=\left({\displaystyle\prod^b_{r=1}r}\right)\,
\left({\displaystyle\prod^{p^k-1}_{r=b+1}r}\right)
={\displaystyle\prod^{p^k-1}_{r=1}r}=(p^k-1)!
$$ we have
$$
\left({\displaystyle\prod^b_{r=1}(a_{(k)}p^k+r)}
\right)\,
\left({\displaystyle\prod^{p^k-1-b}_{r=1}(a_{(k)}p^k-r)}\right)
\equiv(p^k-1)!~(\textup{\textup{mod}}~p^k).
$$

We can notice that,
$$
(p^k-1)!=q(p-1)!\,p^{1+p+\ldots+p^{k-1}-k}
$$
with $\textup{gcd}(p,q)=1$ and because $\textup{ord}_{p}((p^k-1)!)=1+p+\ldots+p^{k-1}-k$.
Therefore we have
$$
a_{(k)}c_{(k)}p^{k(p^k-1)}+(p^k-1)!\left\{a_{(k)}-\binom{n}{p^k}\right\}=0.
$$
Equivalently
$$
a_{(k)}c_{(k)}p^{k(p-1)(1+p+\ldots+p^{k-1})}+q(p-1)!\,p^{1+p+\ldots+p^{k-1}-k}
\left\{a_{(k)}-\binom{n}{p^k}\right\}=0
$$

Dividing the above equation by $p^{1+p+\ldots+p^{k-1}-k}$ we have
$$
q(p-1)!\left\{a_{(k)}-\binom{n}{p^k}\right\}
+a_{(k)}c_{(k)}p^{k+(k(p-1)-1)(1+p+\ldots+p^{k-1})}=0.
$$
Thus
$$
q(p-1)!\left\{a_{(k)}-\binom{n}{p^k}\right\}\equiv 0~(\textup{\textup{mod}}~p^k)
$$
Since if $m\equiv
n~(\textup{\textup{mod}}~p^k)$ implies $m\equiv
n~(\textup{\textup{mod}}~p)$ (the converse is not always
true), we also have
$$
q(p-1)!\left\{a_{(k)}-\binom{n}{p^k}\right\}\equiv 0~(\textup{\textup{mod}}~p).
$$
As $q(p-1)!$ with $\textup{gcd}(p,q)=1$ and $p$ are relatively prime, we get
$$
\binom{n}{p^k}-a_{(k)}\equiv 0~(\textup{\textup{mod}}~p).
$$

We finally have
$$
\binom{n}{p^k}\equiv
\left\lfloor\frac{n}{p^k}\right\rfloor~(\textup{\textup{mod}}~p).
$$
\end{proof}
 
\begin{thm}\label{t3.2}
Let $p$ be a prime number, let $k$ be a natural number and 
let $x$ be a positive integer such that
$$
x\equiv r\pmod {p^k}
$$
with $0\leq r<p^k$. Denoting $q=\lfloor\frac{x}{p^k}\rfloor$ the
quotient of the division of $x$ by $p^k$, if
there exists $s\in\mathbb{N}^{\star}$ for which
$$
\lfloor\frac{x}{p^{ks}}\rfloor=q^s
$$
then we have
$$
x\equiv r\pmod {p^{ks}}.
$$
\end{thm}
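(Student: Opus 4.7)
My plan is to apply the division algorithm twice and compare. Write $x=qp^k+r$ with $0\le r<p^k$ as given, and also $x=Qp^{ks}+t$ with $Q:=\lfloor x/p^{ks}\rfloor$ and $0\le t<p^{ks}$. Because $r<p^k\le p^{ks}$, the target congruence $x\equiv r\pmod{p^{ks}}$ is equivalent to $t=r$. Equating the two decompositions and substituting the hypothesis $Q=q^s$ gives
\[
t-r \;=\; qp^k-q^sp^{ks} \;=\; qp^k\bigl(1-q^{s-1}p^{k(s-1)}\bigr).
\]

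For $s=1$ the right-hand side is $0$ (indeed in this case the hypothesis $\lfloor x/p^k\rfloor=q$ is just the definition of $q$), so $t=r$ and the conclusion is immediate. For $s\ge 2$ I would rewrite the hypothesis $\lfloor x/p^{ks}\rfloor=q^s$ in its inequality form, $q^sp^{ks}\le x=qp^k+r<(q+1)p^k$, and divide by $p^k$ to obtain $q^sp^{k(s-1)}\le q$. Since $p\ge 2$, $k\ge 1$, and $s\ge 2$, whenever $q\ge 1$ one has $q^sp^{k(s-1)}\ge q\cdot p\ge 2q>q$, a contradiction. Hence $q=0$, whence $x=r<p^k\le p^{ks}$, and the conclusion $x\equiv r\pmod{p^{ks}}$ is immediate.

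The main thing to be careful about is that the hypothesis forces one of two essentially trivial regimes --- the tautological case $s=1$ or the degenerate case $q=0$ (i.e.\ $x<p^k$) --- so the real content of the argument is just the simple inequality $q^sp^{k(s-1)}\ge q\cdot p$ that rules out every other configuration. I do not foresee any genuine obstacle; the proof is a short exercise in bounding the two divisions against each other.
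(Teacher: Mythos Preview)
Your argument is correct (with the trivial caveat that you silently assume $k\ge 1$; the case $k=0$ is vacuous since then $r=0$ and the conclusion is $x\equiv 0\pmod 1$). It is, however, genuinely different from the paper's proof. The paper does not split into the cases $s=1$ versus $s\ge 2$, nor does it observe that the hypothesis forces $q=0$ when $s\ge 2$. Instead it writes $x=qp^k+r=q^sp^{ks}+r'$ with $0\le r'<p^{ks}$, equates to get $r-r'=qp^k\bigl(q^{s-1}p^{k(s-1)}-1\bigr)$, notes that this quantity is nonnegative, bounded above by $r<p^k$, and divisible by $p^k$, and concludes $r=r'$.

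What your approach buys is insight: you show that the hypothesis $\lfloor x/p^{ks}\rfloor=q^s$ can \emph{only} hold in one of two trivial regimes, $s=1$ (tautological) or $q=0$ (i.e.\ $x<p^k$, in which case $x=r$ already). So the theorem is essentially vacuous, which the paper's more mechanical divisibility-and-bounds argument obscures. Conversely, the paper's route is slightly more uniform in that it does not require a case split on $s$ and does not appeal to the inequality form of the floor hypothesis.
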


\begin{proof}
Given $p$ a prime number, let $x$ be a positive integer such that
$x\equiv r\pmod {p^k}$ with $0\leq r<p^k$. Denoting
$q=\lfloor\frac{x}{p^k}\rfloor$, we assume that
there exists $s\in\mathbb{N}^{\star}$ for which
$$
\lfloor\frac{x}{p^{ks}}\rfloor=q^s.
$$
If $k=0$, the result is obvious since for all integers $x,r$, we have
$x\equiv r\pmod 1$. In the following, we assume that $k\in\mathbb{N}^{\star}$.
\\[0.1in]
Then, we have
$$
x=qp^k+r
$$
and
$$
x=q^sp^{ks}+r'
$$
with $0\leq r'<p^{ks}$. It comes that
$$
qp^k+r=q^sp^{ks}+r'.
$$
So ($s\in\mathbb{N}^{\star}$)
$$
q^sp^{ks}-qp^k=r-r'\geq 0.
$$
Since $0\leq r<p^k$, we have
$$
0\leq r-r'<p^k.
$$
Moreover, rewriting the equality $q^sp^{ks}-qp^k=r-r'$ as 
$$
qp^k(q^{s-1}p^{k(s-1)}-1)=r-r'
$$
we can notice that $p^k|r-r'$. Since $0\leq r-r'<p^k$, it is only possible if
$r-r'=0$ and so
$$
r=r'.
$$
From the equality $x=q^sp^{ks}+r'$, we deduce that
$$
x\equiv r\pmod {p^{ks}}.
$$
\end{proof}

A consequence of the Theorem \ref{t3.2} is that if an integer $y$ is
congruent to a positive integer $x$ modulo $p^k$ such that $x\equiv
r\pmod {p^k}$, provided the
conditions stated in the Theorem \ref{t3.2} are fulfilled, we have also
$y\equiv r\pmod {p^{ks}}$.

It can be verified easily that the product 
$\left(\prod^b_{r=1}(a_{(k)}p^k+r)\right)\,
\left(\prod^{p^k-1-b}_{r=1}(a_{(k)}p^k-r)\right)$ contains the term 
$(a_{(k)}p^k)^{p^k-1}=a_{(k)}^{p^k-1}p^{k(p^k-1)}$. The term
$(a_{(k)}p^k)^{p^k-1}$ is the only
term in $p^{k(p^k-1)}$ which appears in the decomposition of this product into
sum of linear combination of powers of $p$. Notice also that the number
$k(p^k-1)$ is the greatest exponent 
of $p$ in this product 
when we decompose this product into sum of linear combination
of powers of $p$ (like a polynomial expression in variable
$p$). Afterwards, we write  
$a_{(k)}^{p^k-1}$ as $c_{(k)}$ in order to simplify the notation. Thus, 
the quotient of the division of this product by
$p^{k(p^k-1)}$ is $c_{(k)}=a_{(k)}^{p^k-1}$.

So, from the Theorem \ref{t3.2}, we can now write
$$
\left({\displaystyle\prod^b_{r=1}(a_{(k)}p^k+r)}
\right)\,
\left({\displaystyle\prod^{p^k-1-b}_{r=1}(a_{(k)}p^k-r)}\right)
=c_{(k)}p^{k(p^k-1)}+(p^k-1)!.
$$

\section{{Acknowledgements}}
The authors are grateful to Professor Nayandeep Deka Baruah for going through an earlier version of this work and offering various helpful suggestions which
helped to make the presentation much clearer. The authors are also thankful to Ankush Goswami for cleaning up the exposition in the proof of Theorem \ref{mps-jv}. The second named author would also like to thank Bishal Deb for pointing out to him a factual error in \cite{mps} and also to Parama Dutta for a careful reading of the first two sections of this paper.

\end{document}